\newcommand{\numberset}{\mathbb}
\newcommand{\R}{\numberset{R}} 
\renewcommand{\phi}{\varphi} 
\renewcommand{\chi}{\mathcal{X}} 
\newcommand{\Z}{\numberset{Z}} 
\renewcommand{\epsilon}{\varepsilon}
\newtheorem{theorem}{Theorem}
\newtheorem{prop}[theorem]{Proposition}
\newtheorem{definition}[theorem]{Definition}
\newtheorem{remark}[theorem]{Remark}
\newtheorem{example}[theorem]{Example}
\lbrace\begin{aligned}}%
\begin{document}
	\title{Energy minimizing maps with prescribed singularities and Gilbert-Steiner optimal networks}
	\author[1]{Sisto Baldo}
	\author[1]{Van Phu Cuong Le}
	\author[2]{Annalisa Massaccesi}
	\author[1]{Giandomenico  Orlandi}
	
	\affil[1]{Dipartimento di Informatica, Universit\`a di Verona, Italy, e-mail: sisto.baldo@univr.it, vanphucuong.le@univr.it, giandomenico.orlandi@univr.it }
	\affil[2]{Dipartimento di Tecnica e Gestione dei Sistemi Industriali, Universit\`a degli Studi di Padova, Italy, e-mail: annalisa.massaccesi@unipd.it}
	\date{\today}
	\maketitle
	\begin{abstract}
		We investigate the relation between energy minimizing maps valued into spheres having topological singularities at given points and optimal networks connecting them (e.g. Steiner trees, Gilbert-Steiner irrigation networks). We show the equivalence of the corresponding variational problems, interpreting in particular the branched optimal transport problem as a homological Plateau problem for rectifiable currents with values in a suitable normed group. This generalizes the pioneering work by Brezis, Coron and Lieb $\cite{BrezisCoronLieb}$. \end{abstract}
	
	\textbf{Keywords:} liquid crystals, optimal mappings, Steiner tree problem, branched optimal transport, homological Plateau problem, $G$-currents.
	
	\textbf{2010 Mathematics Subject Classification:} 49Q10, 49Q15, 49Q20, 53C38, 58E20.
\section{Introduction}
In their celebrated paper $\cite{BrezisCoronLieb}$, Brezis, Coron and Lieb showed, in the context of harmonic maps and liquid crystals theory, the existence of a close relation between sphere-valued harmonic maps having prescribed topological singularities at given points in $\R^3$ and {\it minimal connections} between those points, i.e., optimal mass transportation networks (in the sense of Monge-Kantorovich) having those points as marginals. This relation was further enlightened by Almgren, Browder and Lieb in $\cite{abh}$, who recovered the results in $\cite{BrezisCoronLieb}$ by interpreting the (minimal connection) optimal transportation problem as a suitable Plateau problem for rectifiable currents having the given marginals as prescribed boundary.

Our aim is to consider minimizing configurations for maps valued into manifolds and with prescribed topological singularities when the energy is possibly more general than the Dirichlet energy, and investigate the connection with Plateau problems for currents (or flat chains) with coefficients in suitable groups. The choice of these groups is linked to the topology of the involved target manifolds. 

In this paper we will consider the particular case where the manifold is a product of spheres and the maps have assigned point singularities, and we will show, in Theorem \ref{thm1} below, that energy minimizing configurations are related with Steiner-type optimal networks connecting the given points, i.e., solutions of the Steiner problem or solutions of the Gilbert-Steiner irrigation problem. The investigation of maps with values into product of spheres arises in several physical problems, such as the study of the  structure of minimizers of two-component Ginzburg-Landau functionals, where the reference (ground state) manifold is a torus ($\mathbb{S}^{1}\times \mathbb{S}^{1}$) (see \cite{Stan}), or the case of Dipole-Free $^3$He-A, where the order parameter takes values into $(\mathbb{S}^{2}\times$ SO(3))$/\Z_{2}$, whose covering space is $\mathbb{S}^{2}\times \mathbb{S}^{3}$ (see \cite{Mermin, Christopher}).
In a companion paper in preparation $\cite{CVO}$ we will discuss and state the results which correspond  to more general situations. Let us also stress that the generalization of the results to a broader class of energies (and thus different norms) is not moot, this being the case, for instance, for dislocations in crystals (see \cite{CoGarMas}).

Steiner tree problems and Gilbert-Steiner (single sink) problems can be formulated as follows: given $n$ distinct points $P_{1},\ldots, P_{n}$ in $\R^{d}$, where $d, n \geq 2$, we are looking for an optimal connected transportation network, $L = \cup_{i=1}^{n-1}\lambda_i$, along which the unit masses initially located at  $P_{1},\ldots, P_{n-1}$ are transported to the target point $P_n$ (single sink); here $\lambda_i$ can be seen as the path of the $i^{\rm th}$ mass flowing from $P_{i}$ to $P_{n}$, and the cost of moving a mass $m$ along a segment with length $l$ is proportional to $lm^{\alpha}$, $\alpha\in[0,1]$. Therefore, we are led to consider the problem
$$
\inf \left\{ I_\alpha(L):\,L = \bigcup_{i=1}^{n-1}\lambda_i\text{ with }\lbrace P_{i}, P_{n} \rbrace \subset \lambda_{i}\text{, for every }i=1,\ldots, n-1 \right\}
\leqno{(I)}
$$
where the energy $I_\alpha$ is computed as $I_\alpha(L)=\int_L |\theta(x)|^\alpha d{\mathcal H}^1(x)$, with $\theta(x) = \sum_{i=1}^{n-1} \mathbf{1}_{\lambda_i}(x)$. Let us notice that $\theta$ stands for the mass density along the network. In particular, we consider the range $\alpha\in[0,1]$:
\begin{itemize}
\item when $\alpha=0$ the problem is equivalent to optimize the total length of the graph $L$, as in the Steiner Tree Problem (STP); 
\item when $\alpha=1$ the problem $(I)$ becomes the well-known Monge-Kantorovich problem;
\item and when $0<\alpha<1$ the problem is known as the Gilbert-Steiner problem, or, more generally, as a branched optimal transport problem, due to the fact that the cost is proportional to a concave function $\theta^{\alpha}$, which favours the clustering of  the mass during the transportation, thus giving rise to the branched structures which characterize the solutions (we refer the reader to $\cite{Bernot2009}$ for an overview on the topic). 
\end{itemize}

In the last decade, the communities of Calculus of Variations and Geometric Measure Theory made some efforts to study (Gilbert-)Steiner problems in many aspects, such as existence, regularity, stability  and numerical feasibility (see for example \cite{Xia, PaSt, MaMa2, MaMa, MariaAnonioAndrea, MariaAnonioAndreaPegonProuff, OuSa, BoLeSa, MaOuVe,  BoOrOu,  BoOu, BoOrOu2} and references therein). Among all the significant results, we would like to mention recent works in $\cite{MaMa2, MaMa}$ and $\cite{BoOrOu, BoOrOu2}$, which are closely related to the present paper. To be more precise, in $\cite{MaMa2, MaMa}$ the authors turn the problem $(I)$ into the problem of mass-minimization of integral currents with multiplicities in a suitable group. For the sake of readability we postpone proper definitions about currents to Section \ref{section2}, in this introduction we only recall that a $1$-dimensional integral current with coefficients in a group can be thought as a formal sum of finitely many curves and countably many loops with coefficients in a given normed abelian group. For instance, considering the group $\Z^{n-1}$ and assigning to the boundary datum $P_{1}, P_{2},\ldots, P_{n-1}, P_{n}$ the multiplicities $e_{1},e_{2},\ldots,e_{n-1},-(e_{1}+\ldots+ e_{n-1})$, respectively (where $\lbrace e_{i} \rbrace_{1 \leq i \leq n-1}$ is the basis of $\R^{n-1}$), we recover the standard model in \cite{MaMa2,MaMa}. 

In fact we can interpret the network $L = \bigcup_{i=1}^{n-1}\lambda_i$ as the superposition of  $n-1$ paths $\lambda_i$ connecting $P_{i}$ to $P_{n}$ labelled with multiplicity $e_{i}$. This point of view  requires a density function with values in $\Z^{n-1}$, which corresponds to the so-called $1$-dimensional current with coefficients in the group $\Z^{n-1}$. Furthermore, by equipping $\Z^{n-1}$ with a certain norm (depending on the cost of the problem), we may define the notion of mass of those currents, and problem $(I)$ turns out to be equivalent to the Plateau problem.
$$
\inf \left\{ \mathbb{M}(T):\,\partial T = e_{1}\delta_{P_1}+e_{2}\delta_{P_2}+\ldots+e_{n-1}\delta_{P_{n-1}}-(e_{1}+e_{2}+\ldots+e_{n-1})\delta_{P_n} \right\}
\leqno{(M)}
$$
where $T$ is a 1-dimensional current with coefficients in the group $\Z^{n-1}$ (again, we refer the reader to the Section $\ref{section2}$ for rigorous definitions). For mass minimization, there is the very useful notion of calibration (see section \ref{section3}), that is, a tool to prove minimality when dealing with concrete configurations (see Example $\ref{examplecalib}$). To be precise, a calibration is a sufficient condition for minimality, see Definition \ref{Calibration} and the following remarks.

In $\cite{BoOrOu, BoOrOu2}$, by using $\cite{MaMa2, MaMa}$,  a variational approximation of the problem $(I)$ was provided through Modica-Mortola type energies in the planar case, and through Ginzburg-Landau type energies (see \cite{ABO2}) in  higher dimensional ambient spaces via $\Gamma$-convergence.  The corresponding numerical treatment is also shown there.

Following $\cite{MaMa2, MaMa}$, $\cite{BoOrOu, BoOrOu2}$, and the strategy outlined in $\cite{abh}$ (relating the energy of harmonic maps with prescribed point singularities to the mass of $1$-dimensional classical integral currents) we provide here a connection between an energy functional with its energy comparable with $k$-harmonic map problem with prescribed point singularities and (Gilbert-)Steiner problems $(I)$. More precisely, let $P_{1},\ldots,P_{n-1}, P_{n}$ in $\R^{d}$ be given, and consider the spaces $H_{i}$ defined as the subsets of $W^{1,d-1}_{\rm loc}(\R^{d}; \mathbb{S}^{d-1})$ where the functions are constant outside a neighbourhood of the segment joining $P_i,P_n$ and have distributional Jacobian $\frac{\alpha_{d-1}}{d}( \delta_{P_i}-\delta_{P_n})$, respectively. Here $\alpha_{d-1}$ is the surface area of the unit ball in $\R^{d}$.

Let $\mathbb{\psi}$ be a norm on $\R^{n-1}$ which will be specified in Section $\ref{section3}$ (see $\eqref{normeuclidean}$), and set 
\begin{equation}\label{def_h}
	\mathbb{H}({\bf u})=\int_{\R^{d}} \mathbb{\psi}(|\nabla u_{1}|^{d-1}, |\nabla u_{2}|^{d-1},\ldots,|\nabla u_{n-1}|^{d-1})\,dx
\end{equation}
where ${\bf u}=(u_{1},\ldots,u_{n-1})\in H_{1}\times H_{2} \times \ldots \times H_{n-1}$ is a $2$-tensor. The functional $\mathbb{H}$ is the so-called $k$-harmonic energy, it is modeled on the $(d-1)$-Dirichlet energy. We will consider here a class of energies $\mathbb E$ for maps in $H_{1}\times H_{2} \times \ldots \times H_{n-1}$ which are suitably related to $\mathbb M$ and $\mathbb H$, according to Definition \ref{def:suiten} below. In this case, we investigate the problem of characterizing
$$
\inf \left\{ \mathbb{E}({\bf u}):\,{\bf u}\in H_{1}\times H_{2} \times \ldots \times H_{n-1}  \right\}.
\leqno{(H)}
$$
The main contribution of this paper is the following equivalence result in the minimization problem for the mass $\mathbb M$ and an energy $\mathbb E$ which is suitably related to $\mathbb M$ and $\mathbb H$.

\begin{theorem}\label{thm1}

Assume that a minimizer of the problem $(M)$ admits a calibration (see Definition \ref{Calibration}). Consider an energy functional $\mathbb{E}$ which is suitably related to $\mathbb M$ and $\mathbb H$, in the sense of Definition \ref{def:suiten}. Then, we have
	\begin{equation}\label{thmharmonic}
	\inf{\mathbb{E}}=\alpha_{d-1} \inf{\mathbb{M}}
	\end{equation}
	or equivalently, in view of paper $\cite{MaMa2, MaMa}$,
	\begin{equation}\label{thmharmonic2}
	\inf{\mathbb{E}}=\alpha_{d-1} \inf{I_\alpha}\,.
	\end{equation}
\end{theorem}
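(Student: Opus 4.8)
The plan is to prove the two inequalities $\inf \mathbb{E} \leq \alpha_{d-1}\inf\mathbb{M}$ and $\inf\mathbb{E} \geq \alpha_{d-1}\inf\mathbb{M}$ separately, exploiting on one side a dipole-type construction (the upper bound) and on the other side the calibration hypothesis (the lower bound). The equivalence with $I_\alpha$ in \eqref{thmharmonic2} is then immediate from the cited equality $\inf\mathbb{M}=\inf I_\alpha$ of \cite{MaMa2,MaMa}, so the real content is \eqref{thmharmonic}. Throughout I would use the precise meaning of \textbf{suitably related} from Definition \ref{def:suiten}, which I expect to encode exactly two things: that $\mathbb{E}(\mathbf{u})$ is bounded above by $\mathbb{H}(\mathbf{u})$ (or a quantity controlled by it) on admissible tensors, and that $\mathbb{E}(\mathbf{u})$ is bounded below by $\alpha_{d-1}\mathbb{M}(T_{\mathbf u})$ where $T_{\mathbf u}$ is the $\Z^{n-1}$-current naturally associated to the distributional Jacobians of the components of $\mathbf u$. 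These are the two hooks the argument hangs on.

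For the upper bound $\inf\mathbb{E}\leq\alpha_{d-1}\inf\mathbb{M}$, I would first take a mass-minimizing current $T$ for $(M)$; since it is a $1$-dimensional integral current with coefficients in $\Z^{n-1}$, it decomposes (after discarding loops, which only increase mass) into finitely many Lipschitz curves with integer multiplicities, and by a polyhedral approximation one may assume $T$ is supported on a finite union of segments. To each such segment carrying multiplicity $e_i$ one associates the classical ``dipole'' map of Brezis–Coron–Lieb / Almgren–Browder–Lieb construction: a map in $W^{1,d-1}$ into $\mathbb{S}^{d-1}$ that is constant off a thin tube around the segment, has the prescribed Jacobian $\frac{\alpha_{d-1}}{d}(\delta_{P}-\delta_{Q})$ at the endpoints, and whose $(d-1)$-Dirichlet energy is within $\epsilon$ of $\alpha_{d-1}\cdot(\text{length})$. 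Stacking these componentwise gives a tensor $\mathbf{u}\in H_1\times\cdots\times H_{n-1}$ with $\mathbb{H}(\mathbf{u})$ close to $\alpha_{d-1}\int\psi(\theta_1,\dots,\theta_{n-1})$, which is $\alpha_{d-1}\mathbb{M}(T)$ by the choice of norm $\psi$ in \eqref{normeuclidean}. By the ``suitably related'' hypothesis $\mathbb{E}(\mathbf{u})\leq\mathbb{H}(\mathbf{u})$, and letting $\epsilon\to 0$ yields the bound. A subtlety here is making the tubes around different segments disjoint near the branch points and checking the Jacobians add correctly where several segments meet — this is the standard but slightly delicate ``gluing at the nodes'' step.

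For the lower bound $\inf\mathbb{E}\geq\alpha_{d-1}\inf\mathbb{M}$, I would fix any admissible $\mathbf{u}=(u_1,\dots,u_{n-1})$, form the vector of distributional Jacobians $\mathbf{j}(\mathbf u)=\bigl(\tfrac{d}{\alpha_{d-1}}\mathrm{Jac}\,u_1,\dots\bigr)$, and check that $T_{\mathbf u}:=$ (the associated $\Z^{n-1}$-flat chain) is admissible for $(M)$, i.e.\ $\partial T_{\mathbf u}=e_1\delta_{P_1}+\cdots-(e_1+\cdots+e_{n-1})\delta_{P_n}$; this follows because each $u_i\in H_i$ has Jacobian $\frac{\alpha_{d-1}}{d}(\delta_{P_i}-\delta_{P_n})$. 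Now let $\omega$ be the calibration for the minimizer $T$ of $(M)$ guaranteed by hypothesis: by the defining property of a calibration (Definition \ref{Calibration}), $\alpha_{d-1}\inf\mathbb{M}=\alpha_{d-1}\mathbb{M}(T)=\alpha_{d-1}\langle \omega, T\rangle=\alpha_{d-1}\langle\omega,T_{\mathbf u}\rangle$ (the last equality since $\partial T=\partial T_{\mathbf u}$ and $\omega$ is closed, so $\langle\omega,\cdot\rangle$ depends only on the boundary), and then $\langle\omega,T_{\mathbf u}\rangle$ — which, unwound, is an integral of a fixed bounded form paired against the Jacobians of $\mathbf u$ — is bounded above by $\mathbb{E}(\mathbf u)$ using the comass bound $\|\omega\|\leq 1$ together with the algebraic pointwise inequality relating $\psi$-type integrands to wedge products of gradients (the Cauchy–Schwarz / Hadamard-type estimate $|\nabla u\wedge\cdots|\leq|\nabla u|^{d-1}$ that powers the classical ABL argument). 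Taking the infimum over $\mathbf u$ gives the inequality.

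The main obstacle, and the place I would spend the most care, is the lower bound's passage from the calibration pairing $\langle\omega,T_{\mathbf u}\rangle$ to a genuine pointwise energy estimate on an \emph{arbitrary} $W^{1,d-1}$ tensor $\mathbf u$ — not just on the nice dipole competitors. One must make sense of pairing the (merely $L^1$, a priori measure-valued) Jacobian current $T_{\mathbf u}$ with the smooth calibration form, justify the ``same boundary $\Rightarrow$ same pairing'' step for flat chains with coefficients in $\Z^{n-1}$ (a Stokes/constancy argument in the $G$-current setting), and then bound $\langle\omega,T_{\mathbf u}\rangle\leq\mathbb{E}(\mathbf u)$ pointwise a.e.\ in $\R^d$; this last inequality is precisely where the structure of $\mathbb{E}$ being \emph{suitably related} to $\mathbb{H}$ must be used, and where the normalization constant $\alpha_{d-1}$ and the choice of $\psi$ in \eqref{normeuclidean} have to match exactly. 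Everything else — the dipole construction, the polyhedral approximation, the decomposition of $1$-dimensional $G$-currents — is by now standard machinery that I would cite rather than redo.
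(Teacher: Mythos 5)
Your proposal is correct and is essentially the paper's own argument: the upper bound $\inf\mathbb{E}\le\alpha_{d-1}\inf\mathbb{M}$ via the Brezis--Coron--Lieb dipole construction on polyhedral approximations of the network (using property \emph{(ii)} of Definition \ref{def:suiten} on the tubes, where the components either coincide or are constant), and the lower bound via property \emph{(i)} together with the calibration. The only point you should restate is the description of $T_{\mathbf u}$: the current built from the pre-jacobians of an arbitrary $\mathbf u\in H_1\times\cdots\times H_{n-1}$ is a \emph{normal} $1$-current with coefficients in $\R^{n-1}$ (Remark \ref{prejacobiancurrent}), not a ``$\Z^{n-1}$-flat chain admissible for $(M)$''; were it admissible for $(M)$ you would not need the calibration at all, and whether its mass can be compared with the integral minimum is exactly the Lavrentiev-type issue of Remark \ref{lavrentiev}. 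This is precisely why the calibration hypothesis enters: the paper phrases it as $\inf\mathbb{N}=\inf\mathbb{M}$ (minimality of the calibrated rectifiable minimizer among normal currents), while you unroll the same inequality by pairing $\omega$ directly with $\frac{1}{\alpha_{d-1}}\mathbf{ju}$ -- same content, and your chain $\alpha_{d-1}\mathbb{M}(T)=\alpha_{d-1}\,\omega(T)=\omega(\mathbf{ju})\le\mathbb{M}(\mathbf{ju})\le\mathbb{E}(\mathbf u)$ never uses integrality, so it goes through once the object is correctly identified. Two small points you should make explicit (the paper glosses over them too): condition \emph{(iii)} of Definition \ref{Calibration} is stated for $h\in\Z^{n}$, but by homogeneity and density it extends to all real $h$, which is what makes $\omega$ a comass-one form acting on normal currents; and the ``same boundary, closed form'' step is justified because a closed $(\R^{n-1})^*$-valued $1$-form on $\R^d$ is exact, so $\omega(T)-\omega(\tfrac{1}{\alpha_{d-1}}\mathbf{ju})=\partial\bigl(T-\tfrac{1}{\alpha_{d-1}}\mathbf{ju}\bigr)(\phi)=0$, both currents having compact support. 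Finally, note that for the abstract theorem the bound $\mathbb{M}(\mathbf{ju})\le\mathbb{E}(\mathbf u)$ is an assumption (property \emph{(i)}), not something to rederive from the Hadamard inequality $|jv|\le(d-1)^{-\frac{d-1}{2}}|\nabla v|^{d-1}$; that inequality is only needed when verifying, as in Proposition \ref{functionalE}, that a concrete candidate $\mathbb{E}$ is suitably related to $\mathbb{M}$ and $\mathbb{H}$.
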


Currently, we cannot evade the assumption on the existence of a calibration, because it is still not known if a calibration, or even a weak version of it, is not only sufficient but also a necessary condition for minimality (see Section \ref{section2}). Nonetheless, dropping this assumption we can still state some partial result as follows. 

\begin{remark}{\rm
	\begin{enumerate}
		\item[(i)] If $\alpha=1$, $\psi=\|\cdot \|_{1}$, $\mathbb{E}=\frac{1}{(d-1)^{\frac{d-1}{2}}}\mathbb{H}$, then we are able to prove that $\eqref{thmharmonic}$ still holds true, as a variant of the main result in $\cite{BrezisCoronLieb}$.
		\item[(ii)] In case $0\leq \alpha <1$, we obtain the following inequality
		\begin{equation}\label{compareintro}
		\alpha_{d-1} \inf{\mathbb{M}}=\alpha_{d-1}\inf{I_\alpha}\geq \inf{\mathbb{E}}\, \geq \alpha_{d-1} \inf{\mathbb{N}}\,.
		\end{equation}
		The investigation of equality in $\eqref{compareintro}$ when $0\leq \alpha <1$ is delicate and will be considered in forthcoming works.
	\end{enumerate}
}\end{remark}
\begin{remark}\label{conjecture}
{\rm	We believe that the assumption of the existence of a calibration is not too restrictive. We actually conjecture that minimizing configurations for the problem $(M)$ admit a calibration in case of uniqueness, which is somehow a generic property (see \cite{Cal_Ma_Stein}). We carry out in Example $\ref{examplecalib}$  the construction of configurations of $n$ points in $\R^{n-1}$ with $n-2$ branching points which are generic in character and these configurations admit a calibration.
}\end{remark}
The organization of the paper is as follows: in Section $\ref{section2}$, we briefly review some basic notions of Geometric Measure Theory which will be used in the paper, in Section $\ref{section3}$ we recall (Gilbert-) Steiner problems and briefly describe their connection with Plateau's problem for currents with coefficients in a group. Finally, in Section $\ref{Prooftheorem1}$ we prove the Theorem $\ref{thm1}$.
\section{Preliminaries and notations}\label{section2}
\subsection{Rectifiable currents with coefficients in a group G}
In this section, we present the notion  $1$-dimensional currents with coefficients in the group $\R^{n-1}$ in the ambient space $\R^{d}$ with $n, d\geq 2$. We refer to $\cite{Ma}$ for a more detailed exposition of the subject. 

Consider $\R^{n-1}$ equipped with a norm $\psi$ and its dual norm $\psi^{*}$. Denote by $\Lambda_{1}(\R^{d})$ the space of $1$-dimensional vectors and by $\Lambda^{1}(\R^{d})$ the space of $1$-dimensional covectors in $\R^{d}$.
\begin{definition}{\rm An $(\R^{n-1})^{*}$-valued  $1$-covector on $\R^{d}$ is a bilinear map
	$$w : \Lambda_{1}(\R^{d})\times \R^{n-1}\longrightarrow \R\,.
	$$

	Let $\lbrace e_{1},e_{2},\ldots,e_{n-1} \rbrace$  be an orthonormal basis of $\R^{n-1}$, and let $\lbrace e^{*}_{1},e^{*}_{2},\ldots,e^{*}_{n-1} \rbrace$ be its dual. Then, each $(\R^{n-1})^{*}$-valued  $1$-covector on $\R^{d}$ can be represented as 
	$w=w_{1} e^{*}_{1}+\ldots+w_{n-1}e^{*}_{n-1}\,,$
	where $w_{i}$ is a ``classical'' $1$-dimensional covector in $\R^{d}$ for each $i=1,\ldots,n-1$. To be precise, the action of $w$ on a pair $(\tau,\theta)\in\Lambda_1(\R^d)\times\R^{n-1}$ can be computed as
	\[
\langle w;\tau,\theta\rangle=\sum_{i=1}^{n-1}\theta_i\langle w_i,\tau\rangle\,,	
	\]
	where the scalar product on the right hand side is the standard Euclidean scalar product in $\R^d$.
We denote by $\Lambda^{1}_{(\R^{n-1},\psi)}(\R^{d})$ the space of $(\R^{n-1})^{*}$-valued $1$-covectors in $\R^{d}$, endowed with the (comass) norm:
$$
| w |_{c,\psi}:=\sup \lbrace \psi^{*} ( \langle w ; \tau, \cdot \rangle ) \, : \,  \vert \tau \vert \leq 1\rbrace\,.$$
Similarly, we can define the notion of space $(\R^{n-1})$-valued $1$-vectors in $\R^{d}$, $\Lambda_{1, (\R^{n-1},\psi)}(\R^{d})$, endowed with pre-dual (mass) norm: for any $v\in \Lambda_{1, (\R^{n-1},\psi)}(\R^{d})$ we define:
\begin{equation}\label{nuclearnorm}
\begin{aligned}
| v |_{m,\psi}:= & \sup \lbrace \langle w, v \rangle \, : \,  \vert w \vert_{c,\psi} \leq 1, w\in \Lambda^{1}_{(\R^{n-1},\psi)}(\R^{d}) \rbrace\,\\
= & \inf \left\{ \sum_{l=1}^L \psi (z_l) |\tau_l| \, : \,  \tau_{1},\ldots,\tau_{l} \in \Lambda_{1}(\R^{d}), \, z_1, \ldots, z_k \in \R^{n-1} \mbox{ s.t. }v=\sum_{l=1}^{L}z_l\otimes\tau_{l}  \right\}\,.
\end{aligned}
\end{equation}
}
\end{definition}
\begin{definition}{\rm
An $(\R^{n-1})^{*}$-valued $1$-dimensional differential form defined on $\R^{d}$ is a map 
$$\omega: \R^{d} \longrightarrow \Lambda^{1}_{(\R^{n-1},\psi)}(\R^{d})\,.$$
Let us remark that the regularity of $\omega$ is inherited from the components $\omega_{i}$, $i=1,\ldots,n-1$. 
	Let $\phi=(\phi_1,\ldots,\phi_{n-1})$ be a function of class $C^{1}(\R^d;\R^{n-1})$. We denote
	$${\rm d}\phi:={\rm d\phi_{1}}e^{*}_{1}+\ldots+{\rm d}\phi_{n-1}e^{*}_{n-1},$$
	where ${\rm d}\phi_{i}$ is the differential of $\phi_{i}$. Thus ${\rm d}\phi \in C(\R^{d};\Lambda^{1}_{(\R^{n-1},\psi)}(\R^{d}) )$.
}\end{definition}
\begin{definition}{\rm
A $1$-dimensional current $T$ with coefficients in $(\R^{n-1},\psi)$ is a linear and continuous map
	$$T: C^{\infty}_{c}\left(\R^{d};\Lambda^{1}_{(\R^{n-1},\psi)}(\R^{d})\right) \longrightarrow \R\,.$$
	Here the continuity is meant with respect to the (locally convex) topology on $C^\infty_c(\R^d;\Lambda^1_{(\R^{n-1},\psi)}(\R^d))$ defined in analogy with the topology on $C^\infty_c(\R^d;\R)$ which allows the definition of distributions.
	The mass of $T$ is defined as
	\[
	\mathbb{M}(T):=\sup \left\{ T(\omega):\, \sup_{x\in \R^{d}}|\omega|_{c,\psi} \leq 1 \right\}\,.
	\]
	Moreover, if $T$ is a $1$-dimensional current with coefficients in $(\R^{n-1}, \psi)$, we define the boundary $\partial T$ of $T$ as a distribution with coefficients in $(\R^{n-1},\psi)$, $\partial T: C^{\infty}_{c}(\R^{d};(\R^{n-1},\psi) ) \longrightarrow \R $, such that $$\partial T(\phi):=T({\rm d}\phi)\,.$$ 
	The mass of $\partial T$ is the supremum norm
	\[
	\mathbb{M}(\partial T):=\sup \left\{ T({\rm d}\varphi):\, \sup_{x\in \R^{d}} \psi^*(\varphi)\leq 1 \right\}\,.
	\]
A current $T$ is said to be normal if $\mathbb{M}(T)+\mathbb{M}(\partial T)<\infty$.
}\end{definition}
\begin{definition}{\rm
	A $1$-dimensional rectifiable current with coefficients in the normed (abelian) group $(\Z^{n-1}, \psi)$ is a ($1$-dimensional) normal current (with coefficients in $(\R^{n-1},\psi)$)	such that there exists a $1$-dimensional rectifiable set $\Sigma\subset\R^d$, an approximate tangent vectorfield $\tau \, : \, \Sigma \longrightarrow \Lambda_{1}(\R^{d})$, and a density function $\theta : \Sigma \longrightarrow \Z^{n-1}$ such that 
	$$T(\omega)=\int_{\Sigma}\langle \omega (x) \tau (x), \theta (x) \rangle \,d\mathcal{H}^{1}(x)$$
	for every $\omega \in C^{\infty}_{c}\left(\R^{d};\Lambda^{1}_{(\R^{n-1},\psi)}(\R^{d}) \right)$. We denote such a current $T$ by the triple $\llbracket\Sigma, \tau, \theta\rrbracket$.
}\end{definition}

\begin{remark}\label{rmk:mass}{\rm
The mass of a rectifiable current $T=\llbracket\Sigma,\tau,\theta\rrbracket$ with coefficients in $(\Z^{n-1}, \psi)$  can be computed as
	$$\mathbb{M}(T):=\sup \left\{ T(\omega):\, \sup_{x\in \R^{d}}|\omega|_{c,\psi} \leq 1 \right\}=\int_{\Sigma}\psi (\theta(x))\,d\mathcal{H}^{1}(x)\,.$$
Moreover, $\partial T: C^{\infty}_{c}(\R^{d};(\R^{n-1},\psi) ) \longrightarrow \R $ is a measure and there exist $x_{1},\ldots,x_{m} \in \R^{d}$, $p_{1},\ldots,p_{m} \in \Z^{n-1}$ such that
	$$\partial T(\phi)=\sum_{j=1}^{m}p_{j}\phi(x_{j}).$$
Finally the mass of the boundary $\mathbb{M}(\partial T)$ coincides with $\sum_{j=1}^{m}\psi(p_{j})$.
	}\end{remark}
	\begin{remark}{\rm
		In the trivial case $n=2$, we consider rectifiable currents with coefficients in the discrete group $\Z$ and we recover the classical definition of integral currents (see, for instance, \cite{FeBook}).
	}\end{remark}
Finally, it is useful to define the components $T$ with respect to the index $i\in\{1,\ldots,n-1\}$: for every $1$-dimensional test form $\tilde\omega\in C^\infty_c(\R^d;\Lambda^1(\R^d))$ we set
	$$T^{i}(\tilde\omega):=T(\tilde\omega e^{*}_{i})\,.$$
	Notice that $T^{i}$ is a classical integral current (with coefficients in $\Z$). Roughly speaking, in some situations we are allowed to see a current with coefficients in $\R^{n-1}$ through its components $(T^{1},\ldots,T^{n-1})$.
	
For future convenience we adopt the notation
\begin{equation}\label{infnormalcurrents}
\inf \mathbb{N}:=\min\{\mathbb{M}(T):\,T \text{ is a 1-dimensional normal current with coefficients in }\R^{n-1}\text{ and }\partial T=S\}\,,
\end{equation}
where $S=e_{1}\delta_{P_1}+\ldots+e_{n-1}\delta_{P_{n-1}}-(e_1+e_2+\ldots+e_{n-1})\delta_{P_{n}}$ is a given boundary (and $\lbrace e_i \rbrace_{i=1}^n$ is the canonical basis of $\R^{n-1}$).
	
When dealing with the Plateau problem in the setting of currents, it is important to remark a couple of critical features. For the sake of understandability, we recall them here for the particular case of $1$-dimensional currents, but the matter does not depend on the dimension. 
\begin{remark}\label{lavrentiev}{\rm
If a boundary $\{P_1,\ldots,P_n\}\subset\R^d$ is given, then the problem of the minimization of mass is well posed in the framework of rectifiable currents and in the framework of normal currents as well. In both cases the existence of minimizers is due to a direct method and, in particular, to the closure of both classes of currents. Obviously
\begin{align*}
\inf{\mathbb N}\le & \min\{\mathbb{M}(T):\,T\text{ rectifiable current with coefficients in }\Z^{n-1}\text{ and boundary }\{P_1,\ldots,P_n\}\}\,,
\end{align*}
but whether the inequality is actually an identity is not known for currents with coefficients in groups. The same question about the occurence of a Lavrentiev gap between normal and integral currents holds for classical currents of dimension bigger than $1$ and it is closely related to the problem of the decomposition of a normal current in rectifiable ones (see \cite{Ma} for a proper overview of this issue). }\end{remark}

A formidable tool for proving the minimality of a certain current is to show the existence of a calibration.

\begin{definition}\label{Calibration}{\rm
	Consider a rectifiable current $T=\llbracket\Sigma, \tau, \theta\rrbracket$ with coefficients in $\Z^n$, in the ambient space $\R^{d}$. A smooth $(\R^{n})^{*}$-valued differential form $\omega$ in $\R^{d}$ is a calibration for $T$ if the following conditions hold:
	\begin{enumerate}
		\item[(i)]\label{clr1} for a.e $x\in \Sigma$ we have that $\langle \omega(x); \tau(x), \theta (x)\rangle=\psi (\theta(x));$ 
		\item[(ii)]\label{clr2} the form is closed, i.e, ${\rm d}\omega=0;$
		\item[(iii)]\label{clr3} for every $x\in \R^{d}$, for every unit vector $t \in \R^{d}$ and for every $h\in \Z^{n}$, we have that
		$$\langle \omega(x); t, h \rangle \leq \psi (h)\,.$$
	\end{enumerate}
}\end{definition}

It is straightforward to prove that the existence of a calibration associated to a current implies the minimality of the current itself. Indeed, with the notation in Definition \ref{Calibration}, if $T'=\llbracket\Sigma',\tau',\theta'\rrbracket$ is a competitor, i.e., $T'$ is a rectifiable current with coefficients in $\Z^n$ and $\partial T'=\partial T$, then
\[
{\mathbb M}(T)=\int_{\Sigma}\psi(\theta)=\int_{\Sigma}\langle\omega;\tau,\theta\rangle=\int_{\Sigma'}\langle\omega;\tau',\theta'\rangle\le\int_{\Sigma'}\psi(\theta')={\mathbb M}(T')\,.
\]

We stress that fact that the existence of a calibration is a sufficient condition for the minimality of a current, so it is always a wise attempt when a current is a good candidate for mass minimization. Nonetheless, it is also natural to wonder if every mass minimizing current has its own calibration and this problem can be tackled in two ways: for specific currents or classes of currents (such as holomorphic subvarieties) one has to face an extension problem with the (competing) constraints (ii) and (iii), since condition (i) already prescribes the behaviour of the form on the support of the current. In general, one may attempt to prove the existence of a calibration as a result of a functional argument, picking it in the dual space of normal currents, but this approach has two still unsolved problems:
\begin{itemize}
\item the calibration is merely an element of the dual space of normal currents, thus it is far to be smooth;
\item this argument works in the space of normal currents and it is not known whether a minimizer in this class is rectifiable as well (see Remark \ref{lavrentiev}).
\end{itemize}
Anyway, in this specific case of currents with coefficients in $\Z^n$ which match the energy minimizing networks of a branched optimal transport problem (with a subadditive cost), we think that the Lavrentiev phenomenon cannot occur, as explained in Remark \ref{conjecture}. 

\subsection{Distributional Jacobian}
We recall the definition of distributional Jacobian of a function $u\in W^{1,d-1}_{\rm loc}(\R^{d}; \R^{d})\cap L^{\infty}_{\rm loc}(\R^{d}; \R^{d})$, see also $\cite{JeSo02, ABO1}$.

\begin{definition} Let $u$ be in $W^{1,d-1}_{\rm loc}(\R^{d}; \R^{d})\cap L^{\infty}_{\rm loc}(\R^{d}; \R^{d})$, we define the pre-jacobian $ju \in L^1_{\rm loc}(\R^d;\R^d)$ as
$$ju
:=(\det(u,u_{x_{2}},\ldots,u_{x_{d}}), \det(u_{x_{1}},u,\ldots,u_{x_{d}}),
\ldots,\det(u_{x_{1}},\ldots,u_{x_{d-1}}, u))\,,$$
where $u_{x_j}$ is a $L^{d-1}_{\rm loc}(\R^d;\R^d)$ representative of the partial derivative of $u$ with respect to the $j^{\rm th}$ direction. Thus we define the Jacobian $Ju$ of $u$ as $\frac{1}{d}{\rm d}(ju)$ in the sense of distributions. More explicitly, if $\phi \in C^{\infty}_{c}(\R^{d};\R)$ is a test function, then one has
\begin{equation}\label{distrib_jac}
\int_{\R^{d}}\phi\, Ju\,dx=-\frac{1}{d}\int_{\R^{d}}\nabla \phi \cdot ju\,dx\,.
\end{equation}
The identity required in \eqref{distrib_jac} is clearer if one notices that $ju$ has been chosen in such a way that ${\rm div}(\varphi\tilde u)=\nabla\varphi\cdot j\tilde u+d\varphi\det D\tilde u$ whenever $\tilde u$ is smooth enough to allow the differential computation.
\end{definition}

Once the singularities of the problem ${P_1,\ldots,P_n}$ have been prescribed, we can also introduce the energy spaces $H_{i}$, for each $i=1,\ldots,n-1$. By definition a map $u\in W^{1,d-1}_{\rm loc}(\R^{d}; \mathbb{S}^{d-1})$ belongs to $H_i$ if $Ju=\frac{\alpha_{d-1}}{d}( \delta_{P_i}-\delta_{P_n})$, and there exists a radius $r=r(u)>0$ such that $u$ is constant outside $B(0, r(u))\ni P_{i}, P_{n}$, where
$B(0, r)$ is the open ball of radius $r$ centered at $0$. 

For any $\textbf{u}\in H_1\times \ldots \times H_{n-1}$, we define the (matrix-valued) pre-jacobian of $\textbf{u}$ by
\begin{equation}
\textbf{ju}=(ju_1,\ldots,ju_{n-1})
\end{equation}
and its Jacobian by
\begin{equation}
\textbf{Ju}=(Ju_1,\ldots,Ju_{n-1})\,.
\end{equation}
We observe that $\textbf{ju}$ is actually a $1$-dimensional normal currents with coefficients in $\R^{n-1}$. Moreover
\begin{equation}
\frac{1}{d}\partial \, \textbf{ju}=-Ju\,.
\end{equation}

\begin{definition}\label{def:suiten} Given $P_1,\ldots,P_n\in \R^d$ and a norm $\psi$ on $\R^{n-1}$, a functional $\mathbb{E}$ defined on $H_{1}\times \ldots \times H_{n-1}$ is said to be suitably related to $\mathbb{M}$ and $\mathbb{H}$ (see \eqref{def_h} for its definition) if the following properties hold.
\begin{itemize}
	\item[(i)] $\mathbb{M}(\textbf{\rm\bf ju})\leq \mathbb{E}({\bf u})$, where $\textbf{\rm \bf ju}$ is the normal current defined by the pre-jacobian.
	\item[(ii)] If there exist an open set $U\subset\R^d$ and a subset $I$ of the set of labels ${1,\ldots,n-1}$ such that $u_i=u_l$ for every pair $i,l\in I$ and $u_i=0$ otherwise, we have 
	\begin{equation}
		\mathbb{E}({\bf u}\chi_U) \leq \frac{1}{(d-1)^{\frac{d-1}{2}}} \mathbb{H}({\bf u}\chi_U)\,,
	\end{equation}
	where $\chi_U$ is the characteristic function of $U$.
	\item[(iii)] When $k=1$, the functional $\mathbb{E}$ coincides with the harmonic energy considered in $\cite{BrezisCoronLieb}$.
\end{itemize}
\end{definition}
Let us point out that requirement {\it (ii)} is taylored on the dipole construction maps ${\bf u}=(u_{1},\ldots,u_{n-1})$ in the Step $1$ of the proof of Theorem $\ref{thm1}$.

We consider the following problem:
$$
\inf \left\{\mathbb{E}({\bf u}), \hspace{0.2cm}   {\bf u}=(u_{1},\ldots,u_{n-1})\in H_{1}\times H_{2} \times \ldots \times H_{n-1}  \right\}.
\leqno{(H)}
$$
As indicated in the introduction, the inspiration for considering the problem $(H)$ and comparing it with the irrigation problem $(I)$ is coming from the works $\cite{MaMa2, MaMa}$ and $\cite{abh}$. More precisely, $\cite{MaMa2, MaMa}$ provided a new framework for the problem $(I)$ by proving it to be equivalent to the problem of mass-minimizing currents with coefficients in the group $\Z^{n-1}$ with a suitable norm. The point is to look at each irrigation network $L = \bigcup_{i=1}^{n-1}\lambda_i$ encoded in the current $T=(T^{1},\ldots, T^{n-1})$ where $T^{i}$ is a classical current supported by $\lambda_{i}$, and the irrigation cost of $L$ is the mass of the current $T$. Then, combining this point of view with $\cite{abh}$ (see also $\cite{BrezisCoronLieb}$), where the energy of harmonic maps with prescribed point singularities was related to $1$-dimensional classical currents, we are led to investigate the problem $(H)$ in connection with problem $(I)$. 

Before moving to the next section, we provide a candidate for the functional $\mathbb{E}$ satisfying the properties in Definition \ref{def:suiten}.
Let $\textbf{u}=(u_1,\ldots,u_{n-1}) \in H_1\times \ldots \times H_{n-1}$. Let $e_1,\ldots,e_{n-1}$ be the canonical basis of $\R^{n-1}$, and let $I$ be a subset of $\lbrace 1,\ldots, n-1 \rbrace$, then we denote by
$e_{I}$ the sum $\sum_{i\in I}e_{i}$. We define the energy density $\textbf{e}(\textbf{u})$ at a point $x\in\R^d$ as
\begin{align}
\textbf{e}(\textbf{u})(x)=(d-1)^{-\frac{d-1}{2}}\inf\Bigg\{ & \sum_{I\in{\mathcal{I}}}\|e_I \|_{\alpha}|\nabla u_I(x)|^{d-1}:\, \mbox{where }\textbf{ju}(x)=\sum_{I\in{\mathcal I}}ju_{I}(x)\otimes e_I\,\label{energydensityforE}\\
 & \text{and } \mathcal{I}\text{ is a partition of }\{1,\ldots,n-1\}\Bigg\}\,.\nonumber
\end{align}
To be precise, here the matrix $\textbf{ju} (x)$ is decomposed according to a partition ${\mathcal I}$ of the set $\{1,\ldots,n-1\}$ in such a way that $ju_i(x)=ju_l(x)$ for every pair $i,l\in I$. 

As an example, take ${\bf u}=(u_1,u_2)\in H_1\times H_2$ for some choice of the points $P_1,P_2,P_3\in\R^d$. Then, at some point $x\in \R^d$, either $ju_1(x)\neq ju_2(x)$ or $ju_1(x)=ju_2(x)$.
\begin{itemize}
\item If $ju_1(x)\neq ju_2(x)$, then the unique decomposition that we are allowing is ${\bf j}({\bf u})(x)=ju_1(x)e_1+ju_2(x)e_2$ and $\textbf{e}(\textbf{u})(x)=c_d(|\nabla u_1(x)|^{d-1}+|\nabla u_2(x)|^{d-1})$, where we abbreviated $c_d=(d-1)^{-\frac{d-1}{2}}$.
\item If $ju_1(x)=ju_2(x)$, then, thanks to the subadditivity of $\|\cdot\|_\alpha$, the most convenient decomposition is ${\bf j}({\bf u})(x)=ju_1(x)(e_1+e_2)$ and $\textbf{e}(\textbf{u})(x)=c_d\|e_1+e_2\|_\alpha|\nabla u_1(x)|^{d-1}$.
\end{itemize}
Finally, we consider the functional
\begin{equation}\label{energyforE}
\mathbb{E}(\textbf{u})=\int_{\R^{d}}{\textbf e}(\textbf{u})(x)\, dx.
\end{equation}
\begin{prop}\label{functionalE}Let $\psi$ be the norm 
defined as
\begin{equation}
\psi(h)=\begin{cases} 
||\cdot||_{\alpha}=\left(\sum_{j=1}^{n-1}|h_{j}|^{\frac{1}{\alpha}}\right)^{\alpha} & \mbox{in case } \alpha \in (0; 1], \, h\in \Z^{n-1} \\ 
||\cdot||_{0}=\max \lbrace h_{1},\ldots,h_{n-1} \rbrace & \mbox{in case } \alpha=0, \, h\in \Z^{n-1}\,.
\end{cases}
\end{equation}
Let $\mathbb{E}$ be the functional defined above, in $\eqref{energyforE}$. If $\alpha=1$, i.e. $\psi=\|\cdot \|_{1}$, we choose $\mathbb{E}=\frac{1}{(d-1)^{\frac{d-1}{2}}}\mathbb{H}$. 
Then $\mathbb{E}$ is suitably related to $\mathbb{M}$ and $\mathbb{H}$ in the sense of Definition \ref{def:suiten}.
\end{prop}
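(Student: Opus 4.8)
The plan is to verify the three conditions (i), (ii), (iii) of Definition~\ref{def:suiten} for the functional $\mathbb E$ defined in \eqref{energyforE}, treating the case $\alpha=1$ (where $\mathbb E = c_d\,\mathbb H$ with $c_d=(d-1)^{-\frac{d-1}{2}}$) separately, since there $\psi=\|\cdot\|_1$ and the three checks are essentially those already carried out in \cite{BrezisCoronLieb} after rescaling.

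\emph{Condition (i).} I would start from the pointwise identity $\mathbf{ju}(x)=\sum_{I\in\mathcal I}ju_I(x)\otimes e_I$ that underlies the definition of the energy density, and compare $\mathbb M(\mathbf{ju})$ with $\mathbb E(\mathbf u)$ \emph{at the level of densities}. By Remark~\ref{rmk:mass} (extended to normal currents) the mass density of $\mathbf{ju}$ at $x$ is $|\mathbf{ju}(x)|_{m,\psi}$, which by the infimum formula \eqref{nuclearnorm} for the nuclear norm is at most $\sum_{I\in\mathcal I}\psi(e_I)\,|ju_I(x)|$ for \emph{any} admissible partition $\mathcal I$; here one uses that $\psi=\|\cdot\|_\alpha$ restricted to $\Z^{n-1}$ agrees with the norm on $\R^{n-1}$ used in the mass. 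Then the elementary bound $|ju_i(x)|\le c_d|\nabla u_i(x)|^{d-1}$ — the same Hadamard-type inequality between a Jacobian minor and a product of gradient norms that is classical for $d=3$ in \cite{BrezisCoronLieb}, with the constant $(d-1)^{-(d-1)/2}$ coming from maximizing a product of $d-1$ terms with fixed sum of squares — upgrades this to $|\mathbf{ju}(x)|_{m,\psi}\le c_d\sum_{I}\psi(e_I)|\nabla u_I(x)|^{d-1}$. Taking the infimum over partitions $\mathcal I$ gives exactly $\mathbf e(\mathbf u)(x)$, and integrating over $\R^d$ yields $\mathbb M(\mathbf{ju})\le\mathbb E(\mathbf u)$.

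\emph{Condition (ii).} Here the hypothesis forces $u_i=u_l$ for $i,l\in I$ and $u_i=0$ otherwise, so on $U$ the only partition contributing is the one whose block is $I$ (plus singletons on the zero components), and $\mathbf e(\mathbf u\chi_U)(x)=c_d\,\|e_I\|_\alpha\,|\nabla u_I(x)|^{d-1}$ on $U$. On the other hand $\mathbb H(\mathbf u\chi_U)=\int_U\psi(|\nabla u_1|^{d-1},\dots,|\nabla u_{n-1}|^{d-1})\,dx$, and since all nonzero components equal $|\nabla u_I|^{d-1}$ this is $\int_U\psi(e_I)\,|\nabla u_I|^{d-1}\,dx$ (using $1$-homogeneity of $\psi=\|\cdot\|_\alpha$ and that $\psi(e_I)=\|e_I\|_\alpha=|I|^\alpha$). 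Hence $\mathbb E(\mathbf u\chi_U)=c_d\,\mathbb H(\mathbf u\chi_U)$, which is even an equality, so (ii) holds. I would be slightly careful that $\mathbf u\chi_U$ need not lie in $H_1\times\cdots\times H_{n-1}$, but condition (ii) is a statement about the energy densities and integrals on $U$ only, so this is harmless.

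\emph{Condition (iii).} When $k=1$, i.e.\ $n=2$, there is a single component $u_1\in H_1$, the only partition is trivial, $\psi(e_1)=1$, and $\mathbf e(\mathbf u)(x)=c_d|\nabla u_1(x)|^{d-1}$, so $\mathbb E(\mathbf u)=c_d\int_{\R^d}|\nabla u_1|^{d-1}\,dx=c_d\,\mathbb H(\mathbf u)$, which is precisely the (renormalized) $(d-1)$-Dirichlet energy used in \cite{BrezisCoronLieb}; for $\alpha=1$ this is the chosen definition of $\mathbb E$ outright. The main point requiring care — and the step I expect to be the real obstacle — is condition (i): one must make sure the pointwise comparison of the \emph{nuclear} (mass) norm of the matrix $\mathbf{ju}(x)$ with the infimum \eqref{energydensityforE} is legitimate, in particular that the infimum over partitions on the $\mathbb E$-side dominates the infimum over \emph{arbitrary} rank-one decompositions $v=\sum z_l\otimes\tau_l$ on the mass side (it does, because partitions give a subfamily of admissible decompositions, so the mass-side infimum is \emph{smaller}, which is the direction we need), and that passing from the pointwise bound to the integral bound for normal currents is justified — this is where one invokes that $\mathbf{ju}$ is a normal current with an $L^1$ density, as recorded just before Definition~\ref{def:suiten}, so that $\mathbb M(\mathbf{ju})=\int_{\R^d}|\mathbf{ju}(x)|_{m,\psi}\,dx$.
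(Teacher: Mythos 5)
Your proposal is correct and follows essentially the same route as the paper: the pointwise bound $|\mathbf{ju}(x)|_{m,\psi}\le\sum_{I}\|e_I\|_\alpha|ju_I(x)|$ from the nuclear-norm formula \eqref{nuclearnorm} combined with the Hadamard-type inequality $|ju_i|\le(d-1)^{-\frac{d-1}{2}}|\nabla u_i|^{d-1}$ for property (i), the decomposition $\mathbf{ju}=jv\otimes e_I$ with homogeneity of $\|\cdot\|_\alpha$ for (ii), and the trivial one-component case together with the separate treatment of $\alpha=1$ for (iii). The only cosmetic difference is that the paper obtains $\mathbb{M}(\mathbf{ju})\le\int_{\R^d}|\mathbf{ju}(x)|_{m,\psi}\,dx$ by testing against forms of comass at most $1$ rather than asserting equality of the mass with the integral of the pointwise mass norm, but only that inequality is used in either argument.
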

\begin{proof}
We start with property {\it (i)}. Let $\omega \in C^{\infty}_{c}\left(\R^{d};\Lambda^{1}_{(\R^{n-1},\psi)}(\R^{d})\right)$ be a test form with comass norm $\sup_{x\in \R^d} |\omega \,|_{c,\psi} \leq 1$. By using the very definition of $|\cdot |_{m,\psi}$, see \eqref{nuclearnorm}, we obtain
\begin{equation}\label{comparenuclearnorm}
\begin{aligned}
|\, \textbf{ju} (\omega)\,|=\left|\int_{\R^d}\langle \textbf{ju}(x), \omega(x) \rangle\, dx\right| \leq \int_{\R^{d}} |\textbf{ju} (x)|_{m,\psi}\,dx\,.
\end{aligned}
\end{equation}
On the other hand, as already observed, for a.e $x\in \R^d$ we have
\begin{equation*}
\begin{aligned}
|\textbf{ju}(x)|_{m,\psi}\leq \inf\left\{\sum_{I\in{\mathcal{I}}}\|e_I \|_{\alpha}|j u_I(x)|:\, \mbox{where }\textbf{ju}(x)=\sum_{I\in{\mathcal I}}ju_{I}(x)\otimes e_I,\,\mathcal{I}\text{ part. of }\{1,\ldots,n-1\}\right\}\,.
\end{aligned}
\end{equation*}
Observe that for any $v\in H_{l}$, $l=1,\ldots,n-1$, one has for a.e $x\in \R^d$
\begin{equation}
|jv(x)|\leq \frac{1}{(d-1)^{\frac{d-1}{2}}}|\nabla v(x)|^{d-1}\,,
\end{equation}
see also $\cite{BrezisCoronLieb}$-page 64, $\cite{abh}$-A.1.3.
Therefore, we obtain that for a.e $x\in \R^d$
\begin{equation}
|\textbf{ju}|_{m,\psi}(x)\leq e(\textbf{u})(x)
\end{equation}
This in turn implies that
\begin{equation}
\begin{aligned}
|\, \textbf{ju} (\omega)\,|\leq \mathbb{E} (\textbf{u})\,.
\end{aligned}
\end{equation}
So, by the arbitrariness of $\omega$, we conclude that
\begin{equation}
\mathbb{M}(\textbf{ju})\leq \mathbb{E} (\textbf{u}).
\end{equation} 

Concerning property {\it(ii)}, assume that, in some open set $U$, each $u_{i}$ is equal to either $0$ or a given function $v\in W^{1,d-1}_{\rm loc}(\R^d,\mathbb{S}^{d-1})$, thus in $U$ the jacobian $\textbf{ju}$ can be written as $\textbf{ju}=jv {e}_{I}$, for some $I\subset \lbrace 1,\ldots,n \rbrace$. This implies that
\begin{equation}
{\bf e}(\textbf{u})(x)\leq \| e_{I} \|_{\alpha} \frac{1}{(d-1)^{\frac{d-1}{2}}} |\nabla v (x)|^{d-1}
\end{equation}
for a.e $x$ in the dipole, so we can conclude that
\begin{equation}
\mathbb{E}(\textbf{u}\chi_U)\leq \mathbb{H}(\textbf{u}\chi_U)\,.
\end{equation}

Finally, if $k=1$ (i.e., we have just one component $\textbf{u}=u$), it is obvious that
\begin{equation}
{\bf e}(\textbf{u})=\frac{1}{(d-1)^{\frac{d-1}{2}}} |\nabla u|^{d-1}.
\end{equation}
To conclude the proof, we observe that, in case $\alpha=1$, that is, $\psi=\| \cdot \|_{1}$, $\mathbb{E}=\frac{1}{(d-1)^{\frac{d-1}{2}}}\mathbb{H}$ and this functional obviously satisfies the three properties.
\end{proof}
\section{(Gilbert-)Steiner problems and currents with coefficients in a group}\label{section3}
Let us briefly recall the Gilbert-Steiner problem and the Steiner tree problem and see how it can be turned into a mass-minimization problem for integral currents in a suitable group. 

Let $n$ distinct points $P_{1},\ldots, P_{n}$ in $\R^{d}$ be given. Denote by $G(A)$ the set of all acyclic graphs $L = \bigcup_{i=1}^{n-1}\lambda_i$, along which the unit masses located at  $P_{1},\ldots, P_{n-1}$ are transported to the target point $P_n$ (single sink). Here $\lambda_i$ is a simple rectifiable curve and represents the path of the mass at $P_{i}$ flowing from $P_{i}$ to $P_{n}$. In $\cite{MaMa2, MaMa}$, the occurrence of cycles in minimizers is ruled out, thus the problem $(I)$ is proved to be equivalent to 

$$
\inf \left\{ \int_L |\theta(x)|^\alpha d{\mathcal H}^1(x), \;\; L\in G(A),  \;\;\theta(x) = \sum_{i=1}^{n-1} \mathbf{1}_{\lambda_i}(x) \right\}
\leqno{(I)}
$$
where $\theta$ is the mass density along the network $L$. Moreover, in $\cite{MaMa2, MaMa}$ the problem $(I)$ can be turned into a mass-minimization problem for integral currents with coefficients in the group $\Z^{n-1}$: the idea is to label differently the masses located at  $P_{1}, P_{2} \ldots, P_{n-1}$ (source points) and to associate the source points $P_{1},\ldots, P_{n-1}$ to the single sink $P_{n}$. Formally, we produce a $0$-dimensional rectifiable current (a.k.a. a measure) with coefficients in $\Z^{n-1}$, given by the difference between
$$\mu^{-}=e_{1}\delta_{P_{1}}+e_{2}\delta_{P_{2}}+ \ldots +e_{n-1}\delta_{P_{n-1}} \mbox{ and }\mu^{+}=(e_{1}+\ldots+e_{n})\delta_{P_{n}}\,.$$
We recall that $\lbrace e_{1},e_{2},\ldots,e_{n} \rbrace$ is the canonical basis of $\R^{n-1}$. The measures $\mu^{-}, \mu^{+}$ are the marginals of the problem $(I)$. To any acyclic graph $L = \bigcup_{i=1}^{n-1}\lambda_i$ we associate a current $T$ with coefficients in the group $\Z^{n-1}$ as follows: to each $\lambda_{i}$ associate the current $T_{i}=\llbracket\lambda_{i},\tau_{i},e_{i}\rrbracket$, where $\tau_{i}$ is the tangent vector of $\lambda_{i}$. We associate to the graph $L = \bigcup_{i=1}^{n-1}\lambda_i$ the current $T=(T_{1},\ldots,T_{n-1})$ with coefficients in $\Z^{n-1}$. By construction we obtain $$\partial T=\mu^{+}-\mu^{-}\,.$$
Choosing the norm $\psi$ on $\Z^{n-1}$ as
\begin{equation}\label{normeuclidean}
\psi(h)=\begin{cases} 
||\cdot||_{\alpha}=\left(\sum_{j=1}^{n-1}|h_{j}|^{\frac{1}{\alpha}}\right)^{\alpha} & \mbox{in case } \alpha \in (0; 1], \, h\in \Z^{n-1} \\ 
||\cdot||_{0}=\max \lbrace h_{1},\ldots,h_{n-1} \rbrace & \mbox{in case } \alpha=0, \, h\in \Z^{n-1}\,,
\end{cases}
\end{equation}


in view of Remark \ref{rmk:mass}, the problem $(I)$ is equivalent to 
$$
\inf \left\{ \mathbb{M}(T), \hspace{0.2cm}  \partial T =\mu^{+}-\mu^{-} \right\}
\,.\leqno{(M)}
$$
We refer the reader to $\cite{MaMa2, MaMa}$ for more details. From now on we restrict our attention to the coefficients group $(\Z^{n-1}, ||\cdot||_{\alpha})$, $0\leq \alpha \leq 1$.
\begin{remark}\label{prejacobiancurrent}
Let $\textbf{u}=(u_1,\ldots,u_{n-1})\in H_1\times \ldots \times H_{n-1}$. One has
\begin{equation}\label{boundaryofprejacobian}
\frac{1}{\alpha_{d-1}} \partial \,\textbf{ju}=\mu^{+}-\mu^{-}
\end{equation}
\end{remark}

We remark that turning the problem $(I)$ into a mass-minimization problem allows to rely on the (dual) notion of calibration, which is a useful tool to prove minimality, especially when dealing with concrete configurations. We also recall that the existence of a calibration (see Definition \ref{Calibration}) associated with a current $T$ implies that $T$ is a mass-minimizing current for the boundary $\partial T$.
\begin{example}\label{examplecalib}{\rm
		Let us consider an irrigation problem with $\alpha=\frac{1}{2}$. We will consider a minimal network joining $n+1$ points in $\R^{n}$, the construction of the network is explained below. Let us stress that in this example the coincidence of the dimension of the ambient space with the dimension of the space of coefficients is needed.
		
Adopting the point of view of \cite{HarveyLawson}, we propose a calibration first, and only {\it a posteriori} we construct a current which fulfills the requirement (i) in Definition \ref{Calibration}. We briefly remind that the problem $(I)$ can be seen as the mass-minimization problem for currents with coefficients in $\Z^{n}$ with the norm $\Vert \cdot \Vert_{\frac{1}{2}}$.  

Let $\{{\rm d}x_1,\ldots,{\rm d}x_n\}$ be the (dual) basis of covectors of $\R^n={\rm span}(e_1,\ldots,e_n)$. We now prove that the differential form 
		\[
		\omega=
		\begin{bmatrix}
		{\rm d}x_{1}\\
		{\rm d}x_{2}\\
		\vdots \\
		{\rm d}x_{n}
		\end{bmatrix}
		\]
		satisfies conditions (ii) and (iii) in Definition $\ref{Calibration}$. Obviously ${\rm d}\omega=0$. Moreover,
		let $\tau=(\tau_{1}, \tau_{2},\ldots,\tau_{n})\in \R^{n}$ be a unit vector (with respect to the Euclidean norm). Thus, for our choice of the norm $\psi=\|\cdot\|_{\frac 12}$ we can compute $\Vert \langle \omega; \tau, \cdot \rangle \Vert^{\frac{1}{2}}=( \tau_{1}^{2}+\tau_{2}^{2}+\tau_{3}^{2}+\ldots+\tau_{n}^{2})^{\frac{1}{2}}=1$. 
		
We will build now a configuration of $n+1$ points $P_{1}, P_{2}, \ldots, P_{n+1}$  in $\R^{n}$ calibrated by $\omega$. Notice that the network has $n-1$ branching points and is somehow generic in character. More precisely, our strategy in building such a configuration is to choose end points, and branching points following the directions parallel to $e_{1}, e_{2}, e_{3}, \ldots, e_{n}, e_{1}+e_{2},e_{1}+e_{2}+e_{3}, \ldots,e_{1}+e_{2}+\ldots+e_{n-1}, e_{1}+e_{2}+\ldots+e_{n}$. We illustrate the construction in $\R^{3}, \R^{4}$. This process can be extended to any dimension.

\begin{itemize}
\item In $\R^{3}$, let us consider $P_{1}=(-1, 0, 0)$, $P_{2}=(0, -1, 0)$, $P_{3}=(1, 1, -1)$, $P_{4}=(2,2 ,1)$. Take, as 
			branching points, $G_{1}=(0, 0, 0)$,  $G_{2}=(1, 1, 0)$. Now consider the current $T=\llbracket\Sigma, \tau, \theta\rrbracket$ with support $\Sigma$ obtained by the union of the segments $\overline{P_1G_1},\overline{P_2G_1},\overline{G_1G_2},\overline{P_3G_2},\overline{G_2P_4}$. 
			\begin{figure}[tbh]
				\centering
				\begin{tabular}{cc}
					\includegraphics[width=0.4\linewidth]{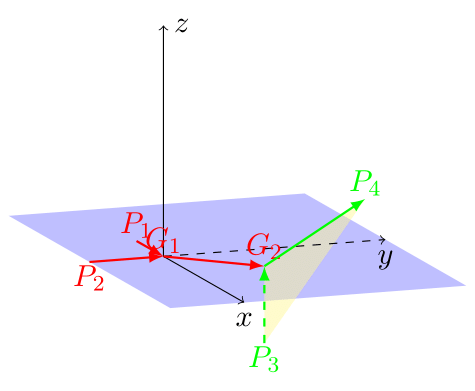}
				\end{tabular}
				\caption{The picture illustrates the construction of $T$.}
				\label{fig:1d_exe}
			\end{figure}
			
			The multiplicity $\theta$ is set as
			$$\theta(x)
			=\begin{cases} 
			e_{1} & \mbox{if } x\in \overline{P_{1}G_{1}} \\ 
			e_{2} & \mbox{if } x\in \overline{P_{2}G_{1}} \\
			e_{1}+e_{2} & \mbox{if } x\in \overline{G_{1}G_{2}} \\
			e_{3} & \mbox{if } x\in \overline{P_{3}G_{2}} \\
			e_{1}+e_{2}+e_{3} & \mbox{if } x\in \overline{G_{2}P_{4}} \\
			0 & \mbox{elsewhere}.\\
			\end{cases}$$			
We observe that $T$ is calibrated by $\omega$, thus $T$ is a minimal network for the irrigation problem with sources $P_1,P_2$ and $P_3$ and sink $P_4$. Notice that edges of the network meet at the branching points with the $90$ degrees angles, as known for branched optimal structures with cost determined by $\alpha=1/2$.
\item In $\R^{4}$, we keep points $P_{1}=(-1, 0, 0, 0)$, $P_{2}=(0, -1, 0, 0)$, $P_{3}=(1, 1, -1, 0)$ and, in general, the whole network of the example above as  embedded in $\R^{4}$. We relabel $G_{3}:=(2,2,1,0)$. We now pick $P_{4}$ and $P_{5}$ in such a way that $\overrightarrow{P_4G_{3}}=e_{4}$ and $\overrightarrow{G_{3}P_{5}}=e_{1}+e_{2}+e_{3}+e_{4}$. For instance, we choose	$P_{4}=(2, 2, 1, -1)$ and $P_{5}=(3, 3, 2, 1)$. As before, the marginals of the irrigation problem are $P_1,P_2,P_3,P_4$ as sources and $P_5$ as sink, while $G_1,G_2,G_3$ are branching points.

Let us now consider the current $T=\llbracket\Sigma, \tau, \theta\rrbracket$ supported on the union of segments $\overline{P_1,G_1},\overline{P_2G_1},\overline{G_1G_2},\overline{P_3G_2},\overline{G_2G_3},\overline{P_4G_3},\overline{G_3P_5}$ and multiplicity $\theta$ given by
$$
			\theta(x)
			=\begin{cases} 
			e_{1} & \mbox{if } x\in \overline{P_{1}G_{1}} \\ 
			e_{2} & \mbox{if } x\in \overline{P_{2}G_{1}} \\
			e_{1}+e_{2} & \mbox{if } x\in \overline{G_{1}G_{2}} \\
			e_{3} & \mbox{if } x\in \overline{P_{3}G_{2}} \\
			e_{1}+e_{2}+e_{3} & \mbox{if } x\in \overline{G_{2}G_{3}} \\
			e_{4} & \mbox{if } x\in \overline{P_{4}G_{3}} \\
			e_{1}+e_{2}+e_{3}+e_{4} & \mbox{if } x\in \overline{G_{3}P_{5}} \\
			0 & \mbox{elsewhere}.\\
			\end{cases}$$
			It is easy to check that the orientation of each segment coincides with the multiplicity, 
			therefore $T$ is calibrated by $\omega$.		
			\item This procedure can be replicated to construct a configuration of $n+1$ points $P_{1}, P_{2}, \ldots, P_{n+1}$ in $\R^{n}$ calibrated by $\omega$, always in the case $\alpha=1/2$.	
		\end{itemize}
		}\end{example}
		\begin{example}{\rm
			We now consider a Steiner tree problem. As in the previous example, we aim to construct calibrated configurations joining $n+1$ points in $\R^{n}$ (with $n-1$ branching points). Consider the following differential form:
			\[
			\omega=
			\begin{bmatrix}
			\frac{1}{2}{\rm d}x_{1}+\frac{\sqrt{3}}{2}{\rm d}x_{2}\\
			\frac{1}{2}{\rm d}x_{1}-\frac{\sqrt{3}}{2}{\rm d}x_{2}\\
			\frac{-1}{2}{\rm d}x_{1}-\frac{\sqrt{3}}{2}{\rm d}x_{3}\\	
			\frac{-1}{4}{\rm d}x_{1}+\frac{\sqrt{3}}{4}{\rm d}x_{3}-\frac{\sqrt{3}}{2}{\rm d}x_{4}\\
			\frac{-1}{8}{\rm d}x_{1}+\frac{\sqrt{3}}{8}{\rm d}x_{3}+\frac{\sqrt{3}}{4}{\rm d}x_{4}-\frac{\sqrt{3}}{2}{\rm d}x_{5}\\
			\vdots\\
			\frac{-1}{2^{n-2}}{\rm d}x_{1}+\frac{\sqrt{3}}{2^{n-2}}{\rm d}x_{3}+\frac{\sqrt{3}}{2^{n-3}}{\rm d}x_{4}+\ldots+\frac{\sqrt{3}}{2^{n-k}}{\rm d}x_{k+1}+\ldots+\frac{\sqrt{3}}{4}{\rm d}x_{n-1}-\frac{\sqrt{3}}{2}{\rm d}x_{n}
			\end{bmatrix}\,.
			\]
			It is easy to check that the differential form $\omega$ is a calibration only among those currents having multiplicities 
			$e_{1}, e_{2}, e_{3}, \ldots, e_{n}, e_{1}+e_{2},e_{1}+e_{2}+e_{3}, \ldots,e_{1}+e_{2}+\ldots+e_{n-1}, e_{1}+e_{2}+\ldots+e_{n}$ and hence it will allow to prove the minimality of configurations in the class of currents with those multiplicities  (cf.\cite{Marcello} for the notion calibrations in families). Nevertheless, it is enough to prove the minimality of global minimizers in some configurations.
			
		\begin{itemize}
			\item 	 Consider $n=3$ and
			$P_{1}=\left(\frac{-1}{2},\frac{\sqrt{3}}{2},0\right)$, $P_{2}=\left(\frac{-1}{2},\frac{-\sqrt{3}}{2}, 0\right)$, $P_{3}=\left(\frac{\sqrt{6}}{2}-\frac{1}{2}, 0 ,\frac{\sqrt{3}}{2}\right)$, $P_{4}=\left(\frac{\sqrt{6}}{2}-\frac{1}{2}, 0 , -\frac{\sqrt{3}}{2}\right)$ (see also the example in \cite[Section $3$]{BoOrOu2}).
Indeed, we observe that the lengths $|\overline{P_{1}P_{2}}|=|\overline{P_{1}P_{3}}|=|\overline{P_{1}P_{4}}|=|\overline{P_{2}P_{3}}|=|\overline{P_{2}P_{4}}|=|\overline{P_{3}P_{4}}|=\sqrt{3}$, meaning that the convex envelope of points $P_{1},P_{2},P_{3},P_{4}$ is a tetrahedron: this observation allows us to restrict our investigation among all currents having multiplicities $e_{1}, e_{2}, e_{3}, e_{1}+e_{2}, e_{1}+e_{2}+e_{3}$. More precisely,  given any $1$-dimensional integral current $T$ with $\partial T=(e_{1}+e_{2}+e_{3})\delta_{P_{4}}-e_{1}\delta_{P_{1}}-e_{2}\delta_{P_{2}}-\ldots -e_{3}\delta_{P_{3}}$ whose support is an acyclic graph with two additional Steiner points, we can always construct a corresponding current $L$ with multiplicities $e_{1}, e_{2}$, $e_{1}+e_{2}$, $e_{1}+e_{2}+e_{3}$ having the same boundary with $T$ such that $\mathbb{M}(T)=\mathbb{M}(L)$ thanks to the symmetric configuration $P_{1}, P_{2}, P_{3}, P_{4}$ combined with the fact that any minimal configuration cannot have less than two Steiner points. Indeed, by contradiction, if a minimal configuration for the vertices of a tetrahedron had $1$ Steiner point, then this configuration would violate the well-known property of the $120$ degrees angles at Steiner points.
Therefore, $\omega$ calibrates the current $T=\llbracket\Sigma, \tau, \theta\rrbracket$, where $S_{1}=(0, 0, 0), S_{2}=\left(\frac{\sqrt{6}}{2}-1, 0, 0\right)$ are the Steiner points, $\Sigma=\overline{P_{1}S_{1}}\cup \overline{P_{2}S_{1}} \cup \overline{S_{1}S_{2}} \cup \overline{P_{3}S_{2}} \cup \overline{S_{2}P_{4}}$ and the multiplicity is given by
			$$\theta(x)
			=\begin{cases} 
			e_{1} & \mbox{if } x\in \overline{P_{1}S_{1}} \\ 
			e_{2} & \mbox{if } x\in \overline{P_{2}S_{1}} \\
			e_{1}+e_{2} & \mbox{if } x\in \overline{S_{1}S_{2}} \\
			e_{3} & \mbox{if } x\in \overline{P_{3}S_{2}} \\
			e_{1}+e_{2}+e_{3} & \mbox{if } x\in \overline{S_{2}P_{4}} \\
			0 & \mbox{elsewhere}\,.\\
			\end{cases}$$	
			\item Using the same strategy of Example \ref{examplecalib}, we can build a configuration $P_{1}, P_{2}, P_{3}, P_{4}, P_{5}$ in $\R^{4}$ starting from the points $P_{1}, P_{2}, P_{3}, P_{4}$ above, in such a way that the new configuration is calibrated by $\omega$ among all currents with multiplicities $e_{1}, e_{2}, e_{3}, e_{4}, e_{1}+e_{2},e_{1}+e_{2}+e_{3}, e_{1}+e_{2}+e_{3}+e_{4}$. This construction can be extended to any dimension.
		\end{itemize}
}\end{example}

\section{Proof of the main result}\label{Prooftheorem1}

The proof of Theorem $\ref{thm1}$ is much in the spirit of the dipole construction of $\cite{BrezisCoronLieb, abh}$ (in the version of $\cite{ABO1}$), the properties of the functional $\mathbb{E}$, and making use of the existence of calibration. 
\begin{proof}
Let $\mathbb{E}$ be the functional which fulfills the requirements of Definition \ref{def:suiten}.
	In the first steps we prove the inequality 
	$$\inf{\mathbb{E}}\leq \alpha_{d-1} \inf{I_\alpha}.$$
	
We briefly recall the dipole construction (see, for instance, \cite[Theorem $3.1$, Theorem $8.1$]{BrezisCoronLieb}). Given a segment $\overline{AB}\subset\R^d$ and a pair of parameters $\beta,\gamma>0$, we define 
\begin{equation}\label{neigh}
U:=\{x\in\R^d:\,{\rm dist}(x,\overline{AB})<\min\{\beta,\gamma\,{\rm dist}(x,\{A,B\})\}\}\subset\R^d
\end{equation} 
to be a pencil-shaped neighbourhood with core $\overline{AB}$ and parameters $\beta,\gamma$. For any fixed $\varepsilon>0$, the dipole construction produces a function $u\in W^{1,d-1}_{\rm loc}(\R^{d}; \mathbb{S}^{d-1})$ with the following properties:
	\begin{itemize}
	\item $u\equiv (0,\ldots,0,1)$ in $\R^d\setminus U$;
	\item $Ju=\frac{\alpha_{d-1}}{d}(\delta_{A}-\delta_{B})$;
	\item moreover the map $u$ satisfies the following inequality
	\begin{equation}\label{estimateharmonic}
	\frac{1}{(d-1)^{\frac{d-1}{2}}\alpha_{d-1}}\int_{\R^{d}}|\nabla u|^{d-1}dx \leq |AB|+\varepsilon\,,
	\end{equation}
	\end{itemize}	
\textbf{Step 1.} 	
Let $L=\bigcup_{i=1}^{n-1}\lambda_i$ be an acyclic connected polyhedral graph, and $T$ be the associated current with coefficients in $\Z^{n-1}$ corresponding to $L$. Since $L$ is polyhedral, it can also be written as $L=\bigcup_{j=1}^{k} I_{j}$, where $I_j$ are weighted segments. For each segment $I_{j}$ we can find parameters $\delta_j,\gamma_j>0$ such that the pencil-shaped neighbourhood $U_j = \left\{ x \in \R^d:\, \text{dist}(x,I_{j}) \leq \min \left\{ \beta_{j}, \gamma_j\text{dist}(x,\partial I_{j}) \right\} \right\}$ (modelled after \eqref{neigh}) is essentially disjoint from $U_\ell$ for every $\ell\neq j$. Then, for every $i=1,\ldots,n-1$, let 
	$V_{i}=\bigcup_{j\in K_i} U_j$
	be a sharp covering of the path $\lambda_{i}$. To be precise, we choose $K_i\subset\{1,\ldots,k\}$ such that $V_i\cap U_\ell$ is at most an endpoint of the segment $I_\ell$, if $\ell\notin K_i$.
		\begin{figure}[tbh]
		\centering
		\begin{tabular}{cc}
			\includegraphics[width=0.4\linewidth]{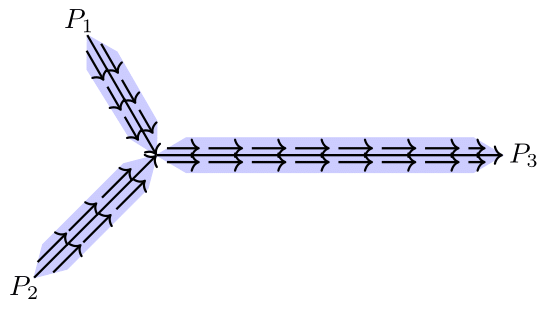}
		\end{tabular}
		\caption{A dipole construction of a Y-shaped graph connecting $3$ points.}
		\label{fig:1d_exe2}
		\end{figure}

For each path $\lambda_i$, $i=1,\ldots,n-1$, we build the map $u_{i}\in H_{i}$ in such a way that it coincides with a dipole associated to the segment $I_j$ in the neighbourhood $U_j$ for each $j\in K_i$. We put $u_i\equiv (0,\ldots,0,1)$ in $\R^d\setminus V_i$.
	
We obtain that $u_{i}\in W^{1,d-1}_{\rm loc}(\R^{d}; \mathbb{S}^{d-1})$ and satisfies $Ju_{i}=\frac{\alpha_{d-1}}{d}(\delta_{P_{i}}-\delta_{P_{n}})$. Moreover, summing up inequality \eqref{estimateharmonic} repeated for each segment $I_j$ with $j\in K_i$, the following inequality holds
	\begin{equation*}
	\frac{1}{(d-1)^{\frac{d-1}{2}}\alpha_{d-1}}\int_{\R^{d}}|\nabla u_{i}|^{d-1}dx \leq \mathbb{M}(T_{i})+k\varepsilon\,,
	\end{equation*}
	where $T_{i}$ is the (classical) integral current corresponding to the $i^{\rm th}$ component of $T$.

In particular, let us stress that the maps $u_1,\ldots,u_{n-1}$ have the following further property: if some paths $\lambda_{i_1},\lambda_{i_2},\ldots,\lambda_{i_m}$ have a common segment $I_j$ for some $j\in K_{i_1}\cap K_{i_2}\cap\ldots\cap K_{i_m}$, then $u_{i_1},\ldots,u_{i_m}$ agree in $U_j$. Furthermore, setting $h_{i_{1}, i_{2},\ldots,i_{m}}=(0,\ldots,|\nabla u_{i_{1}}|^{d-1},\ldots, |\nabla u_{i_{m}}|^{d-1},\ldots,0)$, we obtain
\begin{equation*}
		\frac{1}{(d-1)^{\frac{d-1}{2}}\alpha_{d-1}}\int_{U_j}||h_{i_{1}, i_{2},\ldots,i_{m}}||_{\alpha}dx \leq m^{\alpha}(|I_j|+k\varepsilon)\,,
		\end{equation*}
		where $h_{i_{1}, i_{2},\ldots,i_{m}}=(0,\ldots,|\nabla u_{i_{1}}|^{d-1},\ldots, |\nabla u_{i_{m}}|^{d-1},\ldots,0)$. This holds for every $\alpha\in[0,1]$.
	
	Combining all the previous observations, we can conclude that, given any $\tilde\epsilon >0$ , there exist $u_{i}\in H_{i}$, $i=1,\ldots,n-1$ such that 
	\begin{align*}
	\int_{\R^{d}} ||(|\nabla u_{1}|^{d-1}, |\nabla u_{2}|^{d-1},\ldots,|\nabla u_{n-1}|^{d-1})||_{\alpha}\,dx
	\leq & (d-1)^{\frac{d-1}{2}}\alpha_{d-1} \int_{L}|\theta (x)|^{\alpha}d\mathcal{H}^{1}(x)+\tilde\epsilon
	\\ = & (d-1)^{\frac{d-1}{2}}\alpha_{d-1}\mathbb{M}(T)+\tilde\epsilon\,,
	\end{align*}
	where $\theta(x) = \sum_{i=1}^{n-1} \mathbf{1}_{\lambda_i}(x)$.
Thus, by the properties of $\mathbb{E}$, one obtain that
\begin{equation}
\inf \mathbb{E} \leq \mathbb{E}(\textbf{u})\leq  \frac{1}{(d-1)^{\frac{d-1}{2}}} \mathbb{H}(\textbf{u}) \leq \alpha_{d-1}\mathbb{M}(T)+\tilde\epsilon.
\end{equation}
	
	\noindent {\bf Step 2.} Considering an arbitrary acyclic graph $L=\bigcup_{i=1}^{n-1}\lambda_i$, there is a sequence of acyclic polyhedral graphs $\left(L_{m} \right)_{m\ge 1}$, $L_{m}=\bigcup_{i=1}^{n-1}\lambda^{m}_i$ such that
	the Hausdorff distance $d_{H}(\lambda^m_i, \lambda_i) \leq \frac{1}{m}$, moreover (see \cite[Lemma $3.10$]{BoOrOu}) denoting by $T$ and $T_{m}$ the associated currents with coefficients in $\Z^{n-1}$ we also have that
	$$\mathbb{M}(T_{m})=\int_{L_{m}}|\theta_{m}(x)|^{\alpha}\,d\mathcal{H}^{1}(x) \leq \mathbb{M}(T)=\int_{L}|\theta(x)|^{\alpha}\,d\mathcal{H}^{1}(x) +\frac{1}{m}.$$
	here  $\theta_{m}(x) = \sum_{i=1}^{n-1} \mathbf{1}_{\lambda_i^{m}}(x)$. 
	On the other hand, by previous construction there exists a sequence $\lbrace {\bf u}_{m} \rbrace_{m}$, ${\bf u}_{m}=(u_{1,m},\ldots,u_{n-1,m})\in H_{1}\times \ldots \times H_{n-1}$ such that
	\begin{align*}
	\inf \mathbb{E} \leq \mathbb{E}(\textbf{u}_{m})\leq \frac{1}{(d-1)^{\frac{d-1}{2}}}  \mathbb{H}(\textbf{u}_{m})
	&\leq \alpha_{d-1} \int_{L_{m}}|\theta_{m}(x)|^{\alpha}d\mathcal{H}^{1}(x)+\frac{1}{m}\\
	&=\alpha_{d-1}\mathbb{M}(T_{m})+\frac{1}{m}\\
	&\leq \alpha_{d-1}\mathbb{M}(T)+\frac{1+\alpha_{d-1}}{m}\\
	&=\alpha_{d-1} \int_{L}|\theta(x)|^{\alpha}d\mathcal{H}^{1}(x)+\frac{1+\alpha_{d-1}}{m}.\,,\\
	\end{align*}This implies that
	\begin{equation}
	\inf{\mathbb{E}}\leq \alpha_{d-1} \inf{I_\alpha}=\alpha_{d-1} \inf \mathbb{M}.
	\end{equation}
On the other hand, by the properties $(i)$ of Definition \ref{def:suiten}, we also have that for any $\textbf{u}=(u_1,\ldots,u_{n-1})\in H_{1}\times \ldots \times H_{n-1}$
\begin{equation}
\alpha_{d-1} \inf \mathbb{N} \leq  \mathbb{M}(\textbf{ju})  \leq \mathbb{E}(\textbf{u})
\end{equation}
(see Remark \ref{prejacobiancurrent} to see why the constant $\alpha_{d-1}$ appears in front of $\inf \mathbb{N}$ and also see $\eqref{infnormalcurrents}$ for the definition of $\inf \mathbb{N}$).
This allows us to conclude that
\begin{equation}
\alpha_{d-1} \inf \mathbb{N} \leq  \inf \mathbb{E}.
\end{equation}
Therefore we obtain the following inequality:
\begin{equation}
\alpha_{d-1} \inf \mathbb{N} \leq \inf{\mathbb{E}}\leq \alpha_{d-1} \inf{I_\alpha}=\alpha_{d-1} \inf \mathbb{M}.
\end{equation}
By assumption, a minimizer of the problem $(M)$ admits a calibration, we have 
\begin{equation}
\inf \mathbb{N}=\inf \mathbb{M}=\inf{I_\alpha}.
\end{equation}
this also means that 
\begin{equation}
\alpha_{d-1}\inf \mathbb{N}=\alpha_{d-1}\inf \mathbb{M}=\alpha_{d-1}\inf{I_\alpha}=\inf \mathbb{E}
\end{equation}
which is sought the conclusion.
\end{proof}
\begin{remark}{\rm
	In the proof of Theorem $\ref{thm1}$, step 3, we must assume the existence of a calibration  $\omega$.  Observe that, without this assumption, we still can deduce from that
	\begin{equation}\label{compare6}
	\alpha_{d-1} \inf{\mathbb{M}}=\alpha_{d-1}\inf{I_\alpha} \geq \inf{\mathbb{E}} \geq \alpha_{d-1}\, \inf{\mathbb{N}}
	\end{equation}
	where $\inf{\mathbb{N}}$ is the infimum of the problem obtained measuring the mass among $1$-dimensional normal currents with coefficients in $\R^{n-1}$ (see \eqref{infnormalcurrents}). 
	
	Moreover, in case $\alpha=1$, $\psi=\| \cdot \|_{1}$, $\mathbb{E}=\mathbb{H}$. First, $(I)$ turns out to coincide with the Monge-Kantorovich problem. Then,
	$$\inf{\mathbb{H}}\geq (d-1)^{\frac{d-1}{2}}\alpha_{d-1} \inf{I_\alpha}=(d-1)^{\frac{d-1}{2}}\alpha_{d-1} \inf{\mathbb{M}\,.}$$
	To see this is to use the results of Brezis-Coron-Lieb $\cite{BrezisCoronLieb}$ separately for each map $u_{i}$, $i=1,\ldots,n-1$, for the energy
	$$\mathbb{H}({\bf u})=\int_{\R^{d}}(|\nabla u_{1}|^{d-1}+ |\nabla u_{2}|^{d-1}+\ldots+|\nabla u_{n-1}|^{d-1})\,dx\,,$$
	where, again, ${\bf u}=(u_{1},\ldots,u_{n-1})\in H_{1}\times \ldots \times H_{n-1}$.		
	The investigation of equality cases in $\eqref{compare6}$, when $0\leq \alpha <1$, will be considered in forthcoming works.
}\end{remark}

\section*{Acknowledgements}
The authors are partially supported by GNAMPA-INdAM. The research of the third
author has been supported by European Union's Horizon 2020 programme through project 752018 and by STARS@unipd project ``QuASAR - Questions About Structure And Regularity of currents'' (MASS\_STARS\_MUR22\_01).

The authors wish to warmly thank Giacomo Canevari for extremely fruitful and enlightening discussions.

\end{document}